\numberwithin{equation}{section}
\theoremstyle{plain}
\newtheorem{thm}{Theorem}[section]
\newtheorem{cor}[thm]{Corollary}
\newtheorem{prop}[thm]{Proposition}
\theoremstyle{definition}
\newtheorem{defin}[thm]{Definition}
\newtheorem{remark[thm]}{Remark}
\def\TC{{\mathop\mathrm{TC}\,}}
\def\cat{\protect\operatorname{cat}}
\def\cd{\protect\operatorname{cd}}
\def\Z{{\mathbb Z}}
\def\R{{\mathbb R}}
\def\1{\hbox{\rm\rlap {1}\hskip.03in{\rom I}}}
\def\Bbbone{{\rm1\mathchoice{\kern-0.25em}{\kern-0.25em}
{\kern-0.2em}{\kern-0.2em}I}}
\long\def\forget#1\forgotten{} %
\begin{document}

\title[On topological complexity of hyperbolic groups]
{On topological complexity of hyperbolic groups}
\author[A.~Dranishnikov]
{Alexander Dranishnikov}
\address{A. Dranishnikov, Department of Mathematics, University
of Florida, 358 Little Hall, Gainesville, FL 32611-8105, USA}
\email{dranish@math.ufl.edu}

\begin{abstract} We show that the topological complexity of a finitely generated torsion free hyperbolic group $\pi$ with $\cd\pi=n$ equals $2n$.
\end{abstract}

\maketitle

\section{Introduction}

The topological complexity of a space is a numerical invariant defined by M. Farber~\cite{F} in his study of motion planning in robotics.
 The {\em topological complexity}  $\TC(X)$ of a space $X$ is the minimum of $k$ 
such that $X\times X$ admits an open cover $U_0,\dots, U_k$ with the property that over each $U_i$ there is a continuous  motion planning algorithm $s_i$, i.e. a continuous map $s_i:U_i\to PX$ to the path space  $PX=X^{[0,1]}$ such that $s_i(x,y)(0)=x$ and $s_i(x,y)(1)=y$ for all $(x,y)\in U_i$. 

The topological complexity is a homotopy invariant in spirit of the Lusternik-Schnirelmann category $\cat(X)$. Like in the case of the LS-category, 
generally this invariant is hard to compute. Since $\TC(X)$ is homotopy invariant, one can define $\TC(\pi)$, the topological complexity of a discrete group $\pi$
as $\TC(\pi)=\TC(B\pi)$. This definition works for the LS-category as well. Both group invariants turns to be $\infty$ in the presence of torsions. So we consider 
torsion free groups only.
Eilenberg and Ganea~\cite{EG} proved that the LS-category of a discrete group equals its cohomological dimension,
$\cat(\pi)=\cd(\pi)$. It turns out that in the case of topological complexity the range of $\TC(\pi)$ is between $\cd(\pi)$ and $2\cd(\pi)$ (see~\cite{CP},\cite{Ru},\cite{DS}).

The minimal value of  $\TC$ is taken on abelian groups, $\TC(\pi)=\cd(\pi)$.
Combining results of ~\cite{DS} and \cite{Dr1} present the cases 
of the maximal value $\TC(\pi)=2\cd(\pi)$ for groups which are free "square",
$\pi=H\ast H$, of a geometrically finite group. In this paper we investigate further  the question when the topological complexity of a group is maximal.
We consider groups $\pi$ with finite classifying complex  $B\pi$.  In particular, in this paper
we prove the equality $\TC(\pi)=2\cd(\pi)$ for hyperbolic groups.
A giant step in computation of $\TC$ for hyperbolic groups was made by Farber and Mescher~\cite{FM}. In the light of the formula $\cd(\pi\times\pi)=2\cd(\pi)$~\cite{Dr1}
their result states that for hyperbolic groups with $\cd(\pi)=n$ either $\TC(\pi)=2n$ or $\TC(\pi)=2n-1$.

In the paper we use the following properties of torsion free hyperbolic groups: The centralizer of every nontrivial element is cyclic and a finitely generated torsion free hyperbolic group $\pi$  has finite classifying complex $B\pi$.
Our main tool  is the characterization of  $\TC(\pi)$ given by Farber-Grant-Lupton-Oprea~\cite{FGLO}:
\begin{thm}\label{FGLO}
The topological complexity $\TC(\pi)$ of a  group $\pi$ with finite $B\pi$ equals the minimal number $k$ such that the canonical map $f:E(\pi\times\pi)\to E_{\mathcal D}(\pi\times\pi)$ admits
a $(\pi\times\pi)$-equivariant deformation to the $k$-skeleton $E_{\mathcal D}(\pi\times\pi)^{(k)}$.
\end{thm}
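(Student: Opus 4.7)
My strategy is to convert $\TC(\pi)$ into a sectional-category problem, identify the underlying fibration as an equivariant covering, and then apply Schwarz's iterated-join characterisation together with Milnor's join model of $E_{\mathcal{D}}(\pi\times\pi)$.

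First, replace the path fibration $PB\pi\to B\pi\times B\pi$ by a covering model. Since $B\pi$ is aspherical, the homotopy fibre $\Omega B\pi$ is weakly equivalent to the discrete set $\pi$, and a direct monodromy computation shows that $\pi_1(B\pi\times B\pi)=\pi\times\pi$ acts on $\pi$ by $(g,h)\cdot x=gxh^{-1}$, whose stabiliser at $e$ is the diagonal subgroup $\Delta<\pi\times\pi$. Therefore the path fibration is fibrewise homotopy equivalent to the covering
$$p\colon E(\pi\times\pi)/\Delta\longrightarrow E(\pi\times\pi)/(\pi\times\pi)=B\pi\times B\pi,$$
and $\TC(\pi)$ equals the sectional category of $p$. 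By Schwarz's join theorem, this sectional category is at most $k$ if and only if $E(\pi\times\pi)$ admits a $(\pi\times\pi)$-equivariant map into the $(k+1)$-fold fibre join $J^{k+1}:=\bigl((\pi\times\pi)/\Delta\bigr)^{\ast(k+1)}$.

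The complex $J^{k+1}$ is a $(\pi\times\pi)$-CW complex of dimension $k$ with all cell isotropies in the family $\mathcal{D}$ of subgroups of conjugates of $\Delta$. Milnor's infinite join $\bigcup_n J^n$ realises $E_{\mathcal{D}}(\pi\times\pi)$ as a $(\pi\times\pi)$-CW complex whose $k$-skeleton is exactly $J^{k+1}$. Since $\{e\}\in\mathcal{D}$, the space $E_{\mathcal{D}}(\pi\times\pi)$ is non-equivariantly contractible, so equivariant obstruction theory applied to the free $(\pi\times\pi)$-CW complex $E(\pi\times\pi)$ shows that any two $(\pi\times\pi)$-equivariant maps $E(\pi\times\pi)\to E_{\mathcal{D}}(\pi\times\pi)$ are equivariantly homotopic. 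Hence $f$ can be equivariantly deformed into $E_{\mathcal{D}}(\pi\times\pi)^{(k)}$ if and only if some $(\pi\times\pi)$-equivariant map $E(\pi\times\pi)\to J^{k+1}$ exists, which by the preceding step is equivalent to $\TC(\pi)\leq k$.

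The step requiring the most care is the identification of Milnor's $k$-skeleton $J^{k+1}$ with the $k$-skeleton of an arbitrary $(\pi\times\pi)$-CW model of $E_{\mathcal{D}}(\pi\times\pi)$. This depends on the uniqueness up to $(\pi\times\pi)$-equivariant homotopy equivalence of $\mathcal{D}$-classifying spaces, together with an equivariant cellular-approximation argument relating $k$-skeleta; both are standard for families of subgroups, but must be applied with some care to ensure that the conclusion is independent of the chosen model of $E_{\mathcal{D}}(\pi\times\pi)$.
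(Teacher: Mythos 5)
The paper itself offers no proof of this statement: it is quoted verbatim as the main tool from Farber--Grant--Lupton--Oprea \cite{FGLO}, so your attempt can only be measured against the standard argument in the literature. Your overall strategy is indeed the right one (replace the path fibration by the covering $E(\pi\times\pi)/\Delta\to B\pi\times B\pi$ with fibre $S=(\pi\times\pi)/\Delta$, apply Schwarz's join criterion, and compare with a join-type model of $E_{\mathcal D}(\pi\times\pi)$), and the first two steps, as well as the uniqueness-up-to-equivariant-homotopy of maps into $E_{\mathcal D}(\pi\times\pi)$, are sound.

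However, there is a genuine gap at the step you yourself flag: the claim that the Milnor model $J^\infty=\bigcup_n S^{\ast n}$ has $k$-skeleton \emph{exactly} $J^{k+1}=S^{\ast(k+1)}$ is false. The vertices of $J^\infty$ form the disjoint union of infinitely many copies of $S$ (one per join factor), and a $j$-simplex may use vertices from \emph{any} $j+1$ of the factors; hence already the $0$-skeleton of $J^\infty$ is an infinite disjoint union of copies of $S$, not $S=J^1$, and in general $(J^\infty)^{(k)}$ properly contains $J^{k+1}$. This does not hurt the direction $\TC(\pi)\le k\Rightarrow$ ($f$ deforms into the $k$-skeleton), since $J^{k+1}$ is a $k$-dimensional subcomplex of $J^\infty$ and uniqueness of equivariant maps into $E_{\mathcal D}(\pi\times\pi)$ plus equivariant cellular approximation handle any other model. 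But the converse direction, which is exactly what the present paper needs (a deformation of $f$ into $E_{\mathcal D}(\pi\times\pi)^{(k)}$ forces $\TC(\pi)\le k$), is not established by your argument: a map into the $k$-skeleton is not yet a map into $J^{k+1}$. The missing ingredient is an equivariant obstruction-theory lemma: every $(\pi\times\pi)$-CW complex of dimension $\le k$ with isotropy groups in $\mathcal D$ (in particular $E_{\mathcal D}(\pi\times\pi)^{(k)}$) admits a $(\pi\times\pi)$-map to $J^{k+1}$, because for every $H\in\mathcal D$ the fixed set $(J^{k+1})^H=(S^H)^{\ast(k+1)}$ is a join of $k+1$ nonempty spaces ($S^H\ne\emptyset$ since a nontrivial $H\in\mathcal D$ lies in a conjugate of $\Delta$) and is therefore $(k-1)$-connected, so the map can be built cell by cell. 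With that lemma inserted, composing $E(\pi\times\pi)\to E_{\mathcal D}(\pi\times\pi)^{(k)}\to J^{k+1}$ and invoking Schwarz closes the loop; without it, the proof of the inequality $\TC(\pi)\le k$ is incomplete.
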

Here $E_{\mathcal F}G$ is the notations for a classifying space for $G$-action with isotropy groups in the class
of subgroups $\mathcal F$~\cite{Lu}. The class $\mathcal D$ is defined by conjugations of the diagonal subgroup $\Delta(\pi)\subset\pi\times\pi$ and their intersections.

It turns out to be that the essentiality of $f$ in the top dimension can be detected cohomologically.  In the case when $B\pi$ is an oriented manifold this can be done
using the induced map of the orbit spaces $\bar f:B(\pi\times\pi)\to B_{\mathcal D}(\pi\times\pi)$. Generally, our argument uses compactifications of the classifying spaces and
the main result of~\cite{Dr1}.

\subsection{Notations and conventions}
In this paper a group $\pi$ that admits a finite classifying space $B\pi$ is called geometrically finite. Hyperbolic groups in the paper a Gromov's hyperbolic~\cite{Gr}.

The notations for cohomology of a group is $H^*(\pi,M)$ and for
cohomology of a space $H^*(X;M)$.

We consider an actions of a discrete group $G$ on CW-complex $X$ that preserve the CW-complex structure in such a way that if an open cell $e$ is taken by an element $g\in G$ to itself
then $g$ restricted to $e$ is the identity map. Such a complex is called a $G$-CW-complex.

\section{Classifying space $E_{\mathcal D}G$}

For a discrete group $G$ and a family of its subgroups $\mathcal F$ which is closed under conjugation and finite intersection a classifying complex $E_{\mathcal F}G$
is a $G$-CW-complex whose isotropy subgroups are from $\mathcal F$ and for each $G$-CW complex with isotropy groups from $\mathcal F$ there is a uniqe  up to a 
$G$-homotopy a $G$-equivariant map $f:X\to E_{\mathcal F}G$. This implies in particular,that any two such complexes a $G$-homotopy equivalent. 
Note that the universal cover $EG$ of the classifying space $BG$ is a $G$-CW-complex. The corresponding map $$f:EG\to E_{\mathcal F}G$$ is 
called a {\em canonical map}.

It was proven in~\cite{Lu}
that a $G$-CW-complex $X$ is $E_\mathcal FG$ if and only if for each group $F\in\mathcal F$ the fixed point set $X^F$ is contractible.

\subsection{The family $\mathcal D$ for groups with cyclic centralizers}
Let $\pi$ be a torsion free discrete group, we set $G = \pi \times \pi$. We
denote by $\mathcal D$ the smallest family of subgroups $H\subset G$ which contains the diagonal
$\Delta(\pi)\subset\pi\times\pi$, the trivial subgroup and which is closed under  conjugations and finite
intersections. A complete description of the family $\mathcal D$ is given in 4.1 of~\cite{FGLO}. 
Let $\mathcal D'$ denote the set of all groups in $\mathcal D$ except the trivial group.

Also
it was proven in~\cite{FGLO} that for a torsion free group $\pi$ with cyclic centralizers for any two centralizers $Z(a)$, $Z(b)$ of elements $a,b\in \pi$ either
$Z(a)=Z(b)$ or $Z(a)\cap Z(b)=\{e\}$. Their description of $\mathcal D$ in the case of cyclic centralizers can be stated as follows:
\begin{prop}\label{D}
For a torsion free group $\pi$ with cyclic centralizers  a subgroup $H\subset \pi\times\pi$ belongs to $\mathcal D'$ if and only if it is of the form
$$
(\ast)  \ \ \ \ H_{\gamma,b}=(\gamma,e)\Delta(Z(b))(\gamma^{-1},e)
$$
where $\gamma, b\in \pi$ and $\Delta:\pi\to\pi\times\pi$ is the diagonal map.
\end{prop}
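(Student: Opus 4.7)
The plan is to verify both inclusions $\mathcal D'\supseteq\{H_{\gamma,b}\}$ and $\mathcal D'\subseteq\{H_{\gamma,b}\}$ by direct calculation, using the cyclic-centralizer dichotomy recalled just before the statement. For the first inclusion, I will observe that $\Delta(Z(b))$ arises as the intersection $\Delta(\pi)\cap (e,b)\Delta(\pi)(e,b^{-1})$: a pair $(a,a)$ lies in the conjugate iff $a=bab^{-1}$, i.e.\ iff $a\in Z(b)$. Then $H_{\gamma,b}=(\gamma,e)\Delta(Z(b))(\gamma^{-1},e)$ is a further conjugate, hence also in $\mathcal D$.

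For the reverse inclusion, I will let $\mathcal E$ denote the collection of subgroups of the form $(\ast)$ together with the trivial subgroup, and show that $\mathcal E$ is closed under conjugation and finite intersection. Since $\Delta(\pi)=H_{e,e}$ (noting $Z(e)=\pi$), minimality of $\mathcal D$ will then force $\mathcal D\subseteq\mathcal E$. Conjugation closure is a one-line calculation yielding $(g_1,g_2)H_{\gamma,b}(g_1^{-1},g_2^{-1})=H_{g_1\gamma g_2^{-1},\,g_2bg_2^{-1}}$, essentially a reindexing of the cyclic subgroup. For the intersection of $H_{\gamma_1,b_1}$ with $H_{\gamma_2,b_2}$, an element $(x,y)$ lies in both iff $y\in Z(b_1)\cap Z(b_2)$ and $\gamma_1 y\gamma_1^{-1}=\gamma_2 y\gamma_2^{-1}$, the second condition being equivalent to $y\in Z(\gamma_2^{-1}\gamma_1)$ (and vacuous when $\gamma_1=\gamma_2$).

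I expect the main technical step to be organizing the intersection case analysis cleanly, since up to three centralizer subgroups are simultaneously involved. I will apply the dichotomy first to $Z(b_1)\cap Z(b_2)$ and, when these coincide, a second time to the intersection with $Z(\gamma_2^{-1}\gamma_1)$. Each branch produces either $\{e\}$ or one of $H_{\gamma_1,b_1}$, $H_{\gamma_2,b_2}$, so $\mathcal E$ is closed under finite intersection and the proof concludes.
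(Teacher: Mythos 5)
The paper does not actually prove this proposition: it is quoted from \S 4.1 of \cite{FGLO}, so your self-contained verification is a genuine addition rather than a variant of an argument in the text. Your strategy is sound and the key computations check out: $\Delta(Z(b))=\Delta(\pi)\cap(e,b)\Delta(\pi)(e,b)^{-1}$ is correct, the conjugation formula $(g_1,g_2)H_{\gamma,b}(g_1^{-1},g_2^{-1})=H_{g_1\gamma g_2^{-1},\,g_2bg_2^{-1}}$ is correct, and so is the description of $H_{\gamma_1,b_1}\cap H_{\gamma_2,b_2}$ as the set of pairs $(\gamma_1y\gamma_1^{-1},y)$ with $y\in Z(b_1)\cap Z(b_2)\cap Z(\gamma_2^{-1}\gamma_1)$. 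One step is stated too strongly, however: the intersection need not be $\{e\}$ or one of the two original groups. Take $b_1=b_2=e$, so that the $H_{\gamma_i,e}$ are conjugates of the full diagonal; if $c:=\gamma_2^{-1}\gamma_1\ne e$, the intersection is $H_{\gamma_1,c}$, a proper nontrivial subgroup of both. Your two-step dichotomy misses this because the statement ``two centralizers either coincide or meet trivially'' holds only for centralizers of \emph{nontrivial} elements, whereas $Z(e)=\pi$; so whenever one of $b_1,b_2,\gamma_2^{-1}\gamma_1$ is trivial, its centralizer must simply be discarded from the triple intersection before the dichotomy is applied to the remaining cyclic ones. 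With that correction every branch still lands in $\mathcal E$: the intersection of the surviving cyclic centralizers is either trivial or a common $Z(c)$, yielding $H_{\gamma_1,c}$. Since membership in $\mathcal E$ (not equality with one of the two inputs) is all that closure under finite intersections requires, your proof goes through once this case is handled.
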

Thus, the family $\mathcal D'$ for a torsion free group with cyclic centralizers consists of the conjugates of the diagonal subgroup $\Delta(\pi)\subset\pi\times\pi$ and the conjugates of the centralizers 
of all nontrivial elements in $\Delta(\pi)$.

\begin{prop}\label{equal}
Given $\gamma, b,\alpha, c\in\pi$, either $H_{\gamma,b}=H_{\alpha, c}$ or $H_{\gamma,b}\cap H_{\alpha, c}=(e,e)$.
\end{prop}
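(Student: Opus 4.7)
The plan is to show that any nontrivial element in $H_{\gamma,b}\cap H_{\alpha,c}$ forces the two subgroups to coincide, by exploiting the cyclic centralizer hypothesis twice.

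First I would spell out what elements of each subgroup look like. By the formula $(\ast)$ of \propref{D}, an element of $H_{\gamma,b}$ has the form $(\gamma z\gamma^{-1},z)$ for $z\in Z(b)$, and analogously an element of $H_{\alpha,c}$ has the form $(\alpha w\alpha^{-1},w)$ for $w\in Z(c)$. Suppose these are equal; comparing second coordinates gives $z=w$, and comparing first coordinates gives $\gamma z\gamma^{-1}=\alpha z\alpha^{-1}$, i.e.\ $\alpha^{-1}\gamma\in Z(z)$. Thus a nontrivial element in the intersection produces a nontrivial element $z\in Z(b)\cap Z(c)$ together with $\alpha^{-1}\gamma\in Z(z)$.

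Next I would invoke the cyclic centralizer dichotomy recalled just before \propref{D}: for any two elements $p,q\in\pi$, either $Z(p)=Z(q)$ or $Z(p)\cap Z(q)=\{e\}$. Since $z\neq e$ belongs to $Z(b)\cap Z(c)$, applying this to the pairs $(z,b)$ and $(z,c)$ yields $Z(z)=Z(b)=Z(c)$. Consequently $\alpha^{-1}\gamma\in Z(b)$.

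Now comes the step I expect to be the only subtle one: upgrading the commutation relation $\alpha^{-1}\gamma\in Z(z)$ from the single element $z$ to every element of $Z(b)$. This is where cyclicity of $Z(b)$ pays off: since $Z(b)$ is cyclic it is abelian, and $\alpha^{-1}\gamma\in Z(b)$ therefore commutes with every $z'\in Z(b)$. Hence $\gamma z'\gamma^{-1}=\alpha z'\alpha^{-1}$ for all $z'\in Z(b)=Z(c)$, which shows that the parametrizations of $H_{\gamma,b}$ and $H_{\alpha,c}$ produce the same elements, i.e.\ $H_{\gamma,b}=H_{\alpha,c}$. The contrapositive is precisely the desired dichotomy.
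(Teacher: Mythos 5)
Your proof is correct and follows essentially the same route as the paper: a nontrivial common element forces, via the centralizer dichotomy, $Z(b)=Z(c)$ and $\alpha^{-1}\gamma\in Z(b)$, whence the two conjugates coincide. The only difference is that you make explicit the final step (that $\alpha^{-1}\gamma\in Z(b)$ together with commutativity of the cyclic group $Z(b)$ forces the two conjugations to agree on all of $Z(b)$), which the paper leaves implicit when it passes from $\gamma=\alpha b^n$ to $H_{\gamma,b}=H_{\alpha,b}$.
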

\begin{proof}
If $(e,e)\ne (x,y)\in H_{\gamma,b}\cap H_{\alpha, c}$, then $c\in Z(b)$ and hence $H_{\alpha,c}=H_{\alpha, b}$. Also, $\gamma b^k\gamma^{-1}=\alpha b^k\alpha^{-1}$
for some $k$. Hence $\alpha^{-1}\gamma\in Z(b)$ and hence, $\gamma=\alpha b^n$ for some $n$. Then $H_{\gamma,b}=H_{\alpha, b}$.
\end{proof}

We call two elements $a,b\in\pi$ {\em weakly conjugate} if there are integers $k$ and $\ell$ such that $a^k$ is conjugate to $b^\ell$. Clearly this is an equivalence relation.
Note that for a torsion free group the unit form a separate equivalence class.
Let $C$ be the set of the weak conjugacy classes of nontrivial elements in $\pi$. Let $A$ be a section of $C$, i.e. a collection of elements $a_F\in F\in C$ indexed by $C$.
We may assume that $a$ is a generator of $Z(a)\cong\Z$. If not, then $Z(a)$ is generated by some $t$ and $a=t^n$ lies in the same weak conjugacy class.
So we can choose $t$ instead of $a$.

\begin{prop}
The set of groups $\mathcal D'$ can be presented as the disjoint union 
$$\mathcal D'=\coprod_{a\in A}\{g\Delta(Z(a))g^{-1}\mid g\in G\}$$ of  families of groups indexed by $a\in A$ where each family consists of all distinct conjugates of
$\Delta(Z(a))$.
\end{prop}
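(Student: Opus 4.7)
The plan is to combine the explicit parametrization $H_{\gamma,b}=(\gamma,e)\Delta(Z(b))(\gamma^{-1},e)$ from Proposition~\ref{D} with the choice of section $A$ to rewrite each element of $\mathcal{D}'$ as a $G$-conjugate of $\Delta(Z(a))$ for the unique $a\in A$ weakly conjugate to $b$. The two things to verify are then: (i) the assignment gives exactly the indexed union in the statement, and (ii) the families corresponding to distinct $a,a'\in A$ are disjoint.

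The main technical step, which I expect to be the chief obstacle, is the identity $Z(b^k)=Z(b)$ for every nontrivial $b\in\pi$ and every nonzero integer $k$. The inclusion $Z(b)\subseteq Z(b^k)$ is immediate. For the reverse, both are cyclic subgroups sharing the nontrivial element $b^k$, so the dichotomy on centralizers recalled just before Proposition~\ref{D} forces them to coincide. Once this is available, the rest is mechanical: given $H_{\gamma,b}\in\mathcal{D}'$, pick the representative $a=a_F\in A$ of the weak conjugacy class of $b$, so that $b^k=h\,a^\ell h^{-1}$ for some $h\in\pi$ and nonzero $k,\ell$. Then
$$Z(b)=Z(b^k)=hZ(a^\ell)h^{-1}=hZ(a)h^{-1},$$
hence $\Delta(Z(b))=(h,h)\Delta(Z(a))(h,h)^{-1}$, and absorbing this into the $(\gamma,e)$-factor gives $H_{\gamma,b}=(\gamma h,h)\Delta(Z(a))(\gamma h,h)^{-1}$. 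Conversely, for any $(g_1,g_2)\in G$ one rewrites $(g_1,g_2)\Delta(Z(a))(g_1,g_2)^{-1}=H_{g_1g_2^{-1},\,g_2ag_2^{-1}}$, so every conjugate of $\Delta(Z(a))$ lies in $\mathcal{D}'$.

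For disjointness, suppose $g\Delta(Z(a))g^{-1}=g'\Delta(Z(a'))g'^{-1}$ with $a,a'\in A$ and $g=(g_1,g_2)$, $g'=(g_1',g_2')$. Tracking the image of the generator $a$ in coordinates, there exists $y\in Z(a')$ with $g_ja g_j^{-1}=g_j'y g_j'^{-1}$ for $j=1,2$; in particular $y$ is conjugate to $a$ in $\pi$, hence nontrivial. Since $Z(a')$ is cyclic with generator $a'$, one has $y=(a')^n$ with $n\neq 0$, so $a$ is conjugate to a nonzero power of $a'$, meaning $a$ and $a'$ are weakly conjugate. By the defining property of the section $A$, this forces $a=a'$, completing the proof.
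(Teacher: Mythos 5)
Your proposal is correct and follows essentially the same route as the paper: both use the parametrization $H_{\gamma,b}$ from Proposition~\ref{D}, the identity $Z(b)=Z(b^k)$, and the equal-or-trivial dichotomy of centralizers to place each $H_{\gamma,b}$ in the family of the section representative of the weak conjugacy class of $b$, and both derive disjointness by showing that equality of conjugates forces a conjugate of one representative to be a nonzero power of the other. Your write-up is, if anything, slightly tighter, since you justify $Z(b^k)=Z(b)$ explicitly (the paper asserts it) and your disjointness step tracks the generator directly rather than computing with $g^{-1}h$.
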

\begin{proof}
Note that every conjugate $$(x,y)\Delta(Z(a))(x^{-1}y^{-1})=(xy^{-1},e)\Delta(yZ(a)y^{-1})((xy^{-1})^{-1},e),$$ 
$(x,y)\in\pi\times\pi=G$,
is one of the groups $H_{\gamma, b}$ from Proposition~\ref{D}.
Suppose that $b$ belongs to the weak conjugacy class of $a$, $b^k=xa^\ell x^{-1}$. Then $$(\gamma,e)(b^k,b^k)(\gamma^{-1},e)=(\gamma x,x)(a^\ell,a^\ell)((\gamma x)^{-1},x^{-1})\in 
(\gamma x,x)\Delta(Z(a))(\gamma x,x)^{-1}.$$ 
By Proposition~\ref{equal} $H_{\gamma,b^k}$ coincides with $(\gamma x,x)\Delta(Z(a))(\gamma x,x)^{-1}$. Since $Z(b)=Z(b^k)$, we obtain $H_{\gamma,b}=H_{\gamma,b^k}$
and hence,
$$H_{\gamma,b}\in\{g\Delta(Z(a))g^{-1}\mid g\in G\}.$$

Now we show that the families in the union are disjoint. Suppose that $$g\Delta(Z(a))g^{-1}= h\Delta(Z(b))h^{-1}.$$  Then  $\Delta(Z(a))= g^{-1}h\Delta(Z(b))h^{-1}g$.
Let $g^{-1}h=(x,y)$. Then  the condition $$g^{-1}h\Delta(Z(b))h^{-1}g\subset \Delta(\pi)$$ implies $xbx^{-1}=yby^{-1}$. Therefore, $x^{-1}y\in Z(b)$ and, hence,
$y=xb^k$ for some $k$. Then
$$(x,y)\Delta(Z(b))(x^{-1},y^{-1})=(x,x)(e,b^k)\Delta(Z(b))(e,b^{-k})(x^{-1},x^{-1})=$$ $$\Delta(xZ(b)x^{-1})=
\Delta(Z(xbx^{-1}))=\Delta(Z(a)).$$  
The last equality implies that $xbx^{-1}=a^n$ for some $n$. Thus, $b$ is weakly equivalent to $a$.
\end{proof}
We note that the unit $e\in\pi$ form a separate conjugacy class with $Z(e)=\pi$. Thus, the summand corresponding to $e\in A$ consists of conjugates of the diagonal subgroup.

\subsection{Construction of the classifying space $E_{\mathcal D}G$}
Assume that a group $G$ acts on a space $X$ and $A$ is a $G$-invariant closed subset. Let $\phi:A\to Y$ be a $G$-equivariant map.
Then, clearly, the space $Y\cup_\phi X$ obtained from attaching $X$ to $Y$ along $A$ admits a natural $G$-action. 
\begin{defin}
Let $X$ be a CW-complex with a $G$-action preserving the CW-complex structure. Let $\phi:\partial D^n\to X^{(n-1)}$ be an attaching map. We say that the complex $$Z=X\cup_{\bigcup g\phi}(\coprod_{g\in G}D^n)$$ {\em is obtained from $X\cup_\phi D^n$  by the $G$-translation}. Thus, the action of $G$ on $X$ extends to $Z$.
\end{defin}

Let $G=\pi\times\pi$ and let $E\pi$ be the universal covering of a classifying complex $B\pi$ of the group $\pi$. 
As the above we assume that $\pi$ is torsion free and it has centralizers of all nontrivial elements cyclic.
Note that the translates of the diagonal $\Delta=\Delta(E\pi)\subset E\pi\times E\pi$ by elements of $g\in G$ are either disjoint or coincide.
We consider the quotient space
$$
X=(E\pi\times E\pi)/\{g\Delta\}_{g\in G}
$$
obtained from the product $E\pi\times E\pi$ by collapsing to points all translates of the diagonal.
We may assume that $EG$ has a $G$-equivariant CW-complex structure such that $\Delta$ is a subcomplex. 
Then $X$ has the quotient CW-complex structure. Note that the distinct translates $g\Delta$ of $\Delta$
can be indexed by elements of $\pi$
in the following way $\{(\gamma,e)\Delta\}_{\gamma\in\pi}$. We denote the image of of $(\gamma,e)\Delta$ in $X$ by $v_\gamma$  and the image of $\Delta$ by $v=v_e$. Thus, $v_\gamma=(\gamma,e)v$
are  vertices in $X$.
Note that $X$ is contractible and the action of $G$ on $E\pi\times E\pi$ induces a $G$-action on $X$. 

 Let $a$ be a generator
of the centralizer $Z(a)\cong \Z$, $a\in \pi$
Then the set $\{(a^n,e)v\mid n\in\Z\}$ is the fixed point set for the group $\Delta(Z(a))$
and the set $$\{g(a^n,e)(v)\mid n\in\Z\}$$ is the fixed point set of the group  $g\Delta(Z(a))g^{-1}$ for $g\in G$.

We construct $E_{\mathcal D}G$ by attaching to $X$ cells of dimensions 1, 2, and 3.

 We consider the natural action of $G$ on the set of unordered pairs
$\{\{u,v\}\mid u,v\in Gv\}$. 
\begin{prop}
For any $a\in A$ the stabilizer of the pair $\sigma=\{v,v_a\}$ is the centralizer $\Delta(Z(a))$.
\end{prop}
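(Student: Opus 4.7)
The plan is to prove the equality by two inclusions, using an explicit formula for the $G$-action on the vertex set $\{v_\gamma : \gamma \in \pi\}$ of $X$. The key observation is that for any $(x,y) \in G = \pi\times\pi$ one has $(x,y)\cdot v_\gamma = v_{x\gamma y^{-1}}$, since the translate $(x,y)(\gamma,e)\Delta$ coincides with $(x\gamma y^{-1},e)\Delta$ in $E\pi\times E\pi$. Consequently, $(x,y)$ stabilizes the unordered pair $\sigma = \{v_e,v_a\}$ if and only if $\{xy^{-1},xay^{-1}\} = \{e,a\}$ as subsets of $\pi$.

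The inclusion $\Delta(Z(a)) \subseteq \mathrm{Stab}(\sigma)$ is then immediate: for $z\in Z(a)$, the element $(z,z)$ sends $v_e$ to $v_{zez^{-1}} = v_e$ and $v_a$ to $v_{zaz^{-1}} = v_a$, because $z$ commutes with $a$.

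For the reverse inclusion, the equation $\{xy^{-1},xay^{-1}\} = \{e,a\}$ splits into two cases. In the \emph{fixing} case $xy^{-1}=e$ and $xay^{-1}=a$, which forces $x=y$ together with $xax^{-1}=a$, hence $(x,y)\in\Delta(Z(a))$ as desired. In the \emph{swapping} case $xy^{-1}=a$ and $xay^{-1}=e$; substituting $x=ay$ into the second equation gives $aya=y$, equivalently $y^{-1}ay = a^{-1}$.

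The main obstacle is to rule out the swapping case, and this is where the torsion-free hypothesis and the convention that $a$ generates $Z(a) = \langle a\rangle$ enter. From $y^{-1}ay = a^{-1}$ I compute $y^{-2}ay^2 = y^{-1}a^{-1}y = (y^{-1}ay)^{-1} = a$, so $y^2\in Z(a) = \langle a\rangle$, say $y^2 = a^k$. Conjugating the identity $y^2 = a^k$ by $y^{-1}$ yields $y^2 = y^{-1}a^{k}y = (y^{-1}ay)^{k} = a^{-k}$, so $a^{2k}=e$. Torsion-freeness forces $k=0$, hence $y^2=e$ and then $y=e$; but this gives $y^{-1}ay = a$, contradicting $y^{-1}ay = a^{-1}$ since $a\neq e$ by the choice of $a\in A$.
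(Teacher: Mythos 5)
Your proof is correct. The setup and the fixing case coincide with the paper's argument: both compute the action on vertices via $(x,y)\cdot v_\gamma=v_{x\gamma y^{-1}}$ and split the stabilizer condition into a fixing and a swapping case, with the fixing case yielding $x=y\in Z(a)$ identically in both treatments. Where you diverge is the swapping case, which is the real content. The paper passes to the square $(x^2,y^2)$, observes it satisfies the fixing conditions, and then invokes the result quoted from Farber--Grant--Lupton--Oprea (for torsion free groups with cyclic centralizers, $x^2\in Z(a)$ forces $x\in Z(a)$) to place $x,y$ in $Z(a)$; the final contradiction with the swap is left implicit. You instead extract the relation $y^{-1}ay=a^{-1}$ directly and kill it by hand: $y^2\in Z(a)=\langle a\rangle$, conjugation of $y^2=a^k$ by $y$ gives $a^{2k}=e$, and torsion-freeness forces $y=e$, contradicting $a\neq a^{-1}$. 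Your route is more elementary and self-contained (it uses only torsion-freeness and the convention that $a$ generates its centralizer, rather than the imported root-lifting fact), and it has the merit of explicitly showing that the swapping case cannot occur, a point the paper's write-up does not spell out; the paper's route is shorter on the page because it delegates the group-theoretic work to the cited lemma, which is also reused elsewhere in that setting.
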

\begin{proof}
Suppose that $(x,y)\{v,v_a\}=\{v,v_a\}$ $x,y\in\pi$. There are two cases: 
$(x,y)v=v$ and $(x,y)v_a=v_a$  or  $(x,y)v=v_a$ and $(x,y)v_a=v$.

In the first case,  from the equality $(x,y)v=(xy^{-1},v)v=v$ we obtain that $xy^{-1}=e$ and, hence, $x=y$. From the condition 
$(x,y)v_a=v_a$ we obtain $(x,y)v_a=(x,x)(a,e)v=(xa,x)v=(xax^{-1},e)v=(a,e)v$ and, hence, $xa^{-1}x=a$. Thus $x\in Z(a)$ and $(x,y)\in\Delta(Z(a))$.

In the second case we obtain the pair $(x^2,y^2)$ satisfies the conditions of the first case. Therefore, $x^2=y^2$ and $x^2,y^2\in Z(a)$. The latter implies that $x\in Z(a)$ and $y\in Z(a)$ (see~\cite{FGLO}).
\end{proof}
\begin{cor}
For every $n\in \Z$ the stabilizer of the pair $\sigma_n=\{v_{a^n},v_{a^{n+1}}\}$ is $\Delta(Z(a))$.
\end{cor}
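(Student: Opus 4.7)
The plan is to reduce the claim to the preceding proposition by finding a single element $g \in G$ that sends the pair $\sigma_0 := \{v, v_a\}$ to $\sigma_n$ and simultaneously normalizes $\Delta(Z(a))$.

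The natural candidate is $g := (a^n, e) \in G$. Using $v_\gamma = (\gamma, e) v$, one computes $g \cdot v = (a^n, e) v = v_{a^n}$ and $g \cdot v_a = (a^n, e)(a, e) v = (a^{n+1}, e) v = v_{a^{n+1}}$, so indeed $g \cdot \sigma_0 = \sigma_n$. Consequently the stabilizer of $\sigma_n$ equals the $g$-conjugate of the stabilizer of $\sigma_0$; by the preceding proposition this gives $(a^n, e)\, \Delta(Z(a))\, (a^{-n}, e)$.

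It remains to check that conjugation by $g$ preserves $\Delta(Z(a))$. Under the convention adopted in this section, $a \in A$ was chosen so as to generate $Z(a) \cong \Z$; hence every $z \in Z(a)$ is a power of $a$, and in particular $a^n z a^{-n} = z$. Thus $(a^n, e)(z, z)(a^{-n}, e) = (a^n z a^{-n}, z) = (z, z)$, so $g\, \Delta(Z(a))\, g^{-1} = \Delta(Z(a))$, which completes the proof.

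This argument is essentially obstacle-free: it is a one-line translation of the preceding stabilizer computation by the element $(a^n, e)$. The only point to watch is the invocation of the normalization that $a$ generates $Z(a)$; without this selection rule for the section $A$, the translating element might fail to centralize $Z(a)$ element-wise and one would need to argue more indirectly.
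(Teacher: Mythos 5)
Your proof is correct and is essentially the paper's own argument: both observe that $\sigma_n=(a^n,e)\sigma$, so the stabilizer is the conjugate $(a^n,e)\Delta(Z(a))(a^{-n},e)$, which equals $\Delta(Z(a))$. One small remark: the normalization that $a$ generates $Z(a)$ is not actually needed for the last step, since any $z\in Z(a)$ commutes with $a$ (hence with $a^n$) by the very definition of the centralizer.
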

\begin{proof}
We note that $\sigma_n=(a^n,e)\sigma$. Hence the stabilizer of $\sigma_n$ is the conjugate of the stabilizer of $\sigma$, $$(a^n,e)\Delta(Z(a))(a^{-n},e)=\Delta(Z(a)).$$
\end{proof}

For a fixed $a\in A$ we construct a $G$-equivariant CW complex $X_a$ containing $X$ as an invariant subset. 
Let the pair $\sigma=\{v,v_a\}$ also denote the interval spanned by $\{v,v_a\}$. We consider such an interval for each pair of points in the orbit $G\{v,v_a\}$.
Thus, we obtain a $G$-equivariant family of intervals attached to $X$
such that no two interval have the same set of vertices. We denote a new complex by $X_a^1$. The action of $G$ on $X$ extends to $X_a^1$. 
Note that in $X^1_a$ the intervals $(a^n,e)\sigma$, $n\in\Z$, form a real line $R_a\cong\R$ attached to $X$ along the integers 
$\Z\subset\R$ to the vertices  $v_{a^n}$. Then $R_a$ is the fixed point set 
for $\Delta(Z(a))$ and it is contractible. Moreover, each group $g\Delta(Z(a))g^{-1}$, $g\in G$ has the contractible fixed point set $gR_a$.

Next we will make $X_a^1$ simply connected. For that we choose a subcomplex $I\subset X$ homeomorphic to the interval $[0,1]$ connecting the vertices $v$ and $v_a$ and attach a 2-disk $D$ to the circle $I\cup\sigma$. Then we define $X_a^2$ to be a $G$-complex obtained from $X_a^1\cup D$ by the $G$-translation.

The disks $D$ and  $(a,a)D$ together with a contractible space $X$ form in $X_a^2$ a unique up to homotopy  2-spheroid $S_n$. We fill it with a 3-ball $B$, 
using an attaching map for $$\phi:\partial B\to D\cup(a,a)D\cup X^{(2)}.$$ We define a $G$-complex $X_a$ as the complex obtained from $X^2_a\cup_\phi B$ by the $G$-translation.

\begin{prop}
The CW complex $X_a$ has the following properties: 
\begin{itemize}
\item[1.] $X_a$ is $G$-equivariant;

\item[2.] $X_a$ is contractible;

\item[3.] The groups $g\Delta(Z(a))g^{-1}$ have contractible fixed point sets;

\item[4.] $\dim(X_a\setminus X)=3$.
\end{itemize}
\end{prop}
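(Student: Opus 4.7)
Properties (1) and (4) come for free from the construction: each of the three stages $X\subset X_a^1\subset X_a^2\subset X_a$ is produced by a $G$-translation, which by definition extends the existing $G$-action, and only cells of dimensions $1$, $2$, $3$ are attached. The substance of the proposition lies in (2) and (3).

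For (2) I would apply Hurewicz and Whitehead: since $X$ is contractible it suffices to verify that $\pi_1(X_a)=0$ and $H_n(X_a,X)=0$ for all $n\ge 1$. Simple connectivity is van Kampen: the attaching loop of $D$ is $I\cup\sigma$ with $I\subset X$, so $D$ trivializes the free generator of $\pi_1(X_a^1)$ corresponding to $\sigma$, and $G$-translation kills all the remaining generators. The crucial input for the homology is a stabilizer calculation: by the corollary above $\mathrm{Stab}(\sigma)=\Delta(Z(a))$, while I claim $\mathrm{Stab}(D)=\mathrm{Stab}(B)=\{e\}$. Indeed, any element stabilizing $D$ must pointwise fix the attaching circle $I\cup\sigma$ and hence lies in $\mathrm{Stab}(\sigma)=\Delta(Z(a))$; but no nontrivial $(a^n,a^n)$ can pointwise fix an interior point of $I\subset X$, because its fixed point set in $X$ is the discrete set $\{v_{a^k}\}_{k\in\Z}$ (the action of $\pi$ on $E\pi$ is free, so after collapsing the translates of the diagonal the only new fixed points are the collapsed vertices $v_\gamma$ with $\gamma\in Z(a^n)=Z(a)$). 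The same reasoning applied to $D\cup(a,a)D$ handles $B$. Consequently the relative cellular chain complex, viewed as a complex of left $\Z[G]$-modules, takes the form
\[
0 \longrightarrow \Z[G] \xrightarrow{\partial_3} \Z[G] \xrightarrow{\partial_2} \Z[G/\Delta(Z(a))] \longrightarrow 0, \qquad t=(a,a),
\]
in which $\partial_2$ is the canonical projection sending $[D]\mapsto[\sigma]$ and $\partial_3$ sends $[B]\mapsto(1-t)[D]$, because the spheroid $S=D\cup(a,a)D\cup E$ with $E\subset X$ reduces to $[D]-t[D]$ modulo $X$. Since $\Z[G]$ is free as a right $\Z[\langle t\rangle]$-module and $1-t$ is a non-zero-divisor in $\Z[\langle t\rangle]$, this complex is exact, so $H_*(X_a,X)=0$ and Whitehead gives $X_a\simeq\ast$.

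For (3), by the CW-complex convention adopted in the introduction, the fixed point set of any subgroup $H\subset G$ is the subcomplex of those open cells whose stabilizer contains $H$. For $H=\Delta(Z(a))$: within $X$ these cells are exactly the vertices $\{v_{a^k}\}$; within $X_a^1\setminus X$ they are the edges $\sigma_n$, giving the real line $R_a\cong\R$ already identified in the text; and by the stabilizer computations of the previous paragraph, no $2$- or $3$-cell of $X_a\setminus X_a^1$ has $\Delta(Z(a))$ in its stabilizer. Hence the $\Delta(Z(a))$-fixed set of $X_a$ is $R_a$, which is contractible. Equivariance of the action, $\mathrm{Fix}(gHg^{-1})=g\cdot\mathrm{Fix}(H)$, then gives that the fixed set of every conjugate $g\Delta(Z(a))g^{-1}$ is $gR_a$, again contractible.

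The delicate step will be the boundary identification $\partial_3[B]=(1-t)[D]$: one must unwind the construction of the 2-spheroid $S$ and verify that the cap $E\subset X$ contributes nothing in the relative chain complex, with signs arranged so that $[D]$ and $t[D]$ appear with opposite orientation. Once this orientation bookkeeping is pinned down the rest is the standard resolution computation for the augmentation sequence of $\Z[\langle t\rangle]$ induced up to $G$.
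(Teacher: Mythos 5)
Your proposal is correct, but it proves contractibility by a genuinely different route than the paper. The paper only argues point 2 (treating 1, 3, 4 as immediate, as you do), and it does so geometrically: it introduces the subcomplex $Y_a$ obtained from $X$ by attaching $\sigma$, $D$, $B$ and only their $\Delta(Z(a))$-translates, identifies the quotient $Y_a/(X\cup D)$ with a reduced double suspension of $\R$ to see that $Y_a$ is contractible, and then writes $X_a=\bigcup_{[g]\in G/\Delta(Z(a))}gY_a$ with common part $X$ to conclude. You instead work algebraically with the relative cellular chain complex of $(X_a,X)$ as $\Z[G]$-modules, recognizing it as the augmentation resolution of $\Z[\langle t\rangle]$, $t=(a,a)$, induced up to $G$, and finish with Hurewicz--Whitehead after a van Kampen argument for simple connectivity. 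Your route buys a cleaner bookkeeping of exactly which cells appear in each dimension (the 1-cells form $\Z[G/\Delta(Z(a))]$, the 2- and 3-cells free $\Z[G]$-modules, which also makes property 3 transparent), while the paper's decomposition avoids any chain-level computation and exhibits the contractibility of the ``one-column'' piece $Y_a$ directly. The one step you flag as delicate, $\partial_3[B]=\pm(1-t)[D]$, closes quickly and is not a real gap: since $t=(a,a)$ lies in the stabilizer $\Delta(Z(a))$ of $\sigma$ and by the paper's rigidity convention acts as the identity on it, the boundaries of $D$ and $tD$ contain $[\sigma]$ with the \emph{same} sign, so the only relative $2$-cycle carried by $D\cup tD$ with unit coefficients is $[D]-t[D]$; as $\partial_3[B]$ is the image of the fundamental class of the spheroid $S$, it must equal $\pm(1-t)[D]$, and the overall sign is irrelevant for exactness. (A small remark: by the paper's definition of $G$-translation the 2- and 3-cells are indexed by all of $G$ by construction, so your stabilizer computations for $D$ and $B$, while correct under the rigidity convention, are not strictly needed to identify the chain groups.)
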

\begin{proof}
The conditions 1, 3, and 4 are obvious. We prove 2.

Let $Y_a$ be a CW-complex obtained by the above procedure from $X$ by attaching the cells $\sigma$, $D$, and $B$ with the translations by the group $\Delta(Z(a))\cong \Z$.
Since $\sigma$ is fixed by $\Delta(Z(a))$, the complex  $Y_a$ has one new 1-cell $\sigma$. In dimensions 2 and 3, it has cells indexed by $\Z$ such that the 
quotient CW complex $Y_a/(X\cup D)$ isomorphic to the reduced double suspension over the reals $\R$ with $\Z$ as the 0-skeleton. Therefore, $Y_a$ is contractible.
Note that 
$$X_a=\bigcup_{[g]\in G/\Delta(Z(a))} gY_a$$ 
with $X$ being the common part of all summands $gY_a$. Hence $X_a$ is contractible.
\end{proof}

We perform this construction for all $a\in A$ and define $$X_3=\bigcup_{a\in A}X_a$$ which is obtained from the disjoint union $X_3=\coprod_{a\in A}X_a$
by the identification of all copies of $X\subset X_a$.

\begin{prop}\label{no-intersection}
For $a,b\in A$, $a\ne b$,
The orbits   of the pairs $G\{v,v_a \}$ never lands in $R_b$.
\end{prop}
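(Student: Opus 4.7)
The plan is to argue by contradiction: I will assume that some element of the $G$-orbit of the pair $\{v,v_a\}$ coincides with an edge of $R_b$, and extract from this a weak conjugacy between $a$ and $b$---which contradicts the fact that they are distinct representatives of weak conjugacy classes in the section $A$.

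The first step is to record the formula for the $G$-action on the orbit $Gv$. Since $v_\gamma$ is by definition the image in $X$ of $(\gamma,e)\Delta$, and $(x,y)(\gamma,e)\Delta=(x\gamma,y)\Delta=(x\gamma y^{-1},e)\Delta$, we obtain $(x,y)\cdot v_\gamma = v_{x\gamma y^{-1}}$, so in particular $(x,y)\{v,v_a\}=\{v_{xy^{-1}},v_{xay^{-1}}\}$. Meanwhile, by the construction of $R_b\subset X_b^1$, the edges of $R_b$ are exactly the pairs $\{v_{b^n},v_{b^{n+1}}\}$ for $n\in\Z$, and two vertices $v_\alpha,v_\beta$ coincide in $X$ if and only if $\alpha=\beta$.

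Now I would suppose $\{v_{xy^{-1}},v_{xay^{-1}}\}=\{v_{b^n},v_{b^{n+1}}\}$ for some $(x,y)\in G$ and $n\in\Z$, and consider the two possible matchings of the unordered pairs. The matching $xy^{-1}=b^n$, $xay^{-1}=b^{n+1}$ gives $xax^{-1}=b$ upon dividing the second equation by the first on the right; the opposite matching yields $xax^{-1}=b^{-1}$. In either case $a$ is conjugate in $\pi$ to $b^{\pm 1}$, and so (taking $k=1$, $\ell=\pm 1$ in the definition of weak conjugacy) $a$ and $b$ lie in the same weak conjugacy class. Since $A$ is a section of the set of weak conjugacy classes and $a\ne b$, this is the desired contradiction.

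There is no real obstacle here: the entire argument is an elementary chase through the action formula $(x,y)v_\gamma = v_{x\gamma y^{-1}}$, together with minor bookkeeping to handle the two orderings of the unordered pair $\{v,v_a\}$. The torsion-freeness and cyclic-centralizer hypotheses are not even needed for this particular statement; only the fact that $A$ is a section of $C$ plays a role.
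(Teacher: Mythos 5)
Your argument is correct and is essentially the paper's own proof: both reduce to the action formula $(x,y)v_\gamma=v_{x\gamma y^{-1}}$ and extract a conjugacy $xax^{-1}=b^{\pm1}$ (resp.\ a power of $b$), contradicting that $a,b$ are distinct representatives in the section $A$. The only caveat is that you assume the image pair coincides with an \emph{edge} $\{v_{b^n},v_{b^{n+1}}\}$ of $R_b$, whereas the paper (and the later application, where one must rule out any new cell of the fixed-point set of $\Delta(Z(b))$ spanned by vertices of $R_b$) allows the two image vertices to be arbitrary, possibly non-adjacent, vertices $v_{b^k},v_{b^m}$; your computation handles this verbatim, giving $xax^{-1}=b^{m-k}$, which is a nontrivial power of $b$ since $a\ne e$, hence again a weak conjugacy.
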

\begin{proof}
	Suppose $g(v)=v_{b^k}$ and $g(v_a)=v_{b^m}$.  Let $g=(x,y)\in\pi\times\pi$. The first equality implies that
$(x,y)v=(xy^{-1},e)v=(b^k,e)v$ and, hence, $x=b^ky$. The second equality implies $(x,y)(a,e)v=(xa,y)v=(xay^{-1},e)v=(b^m,e)v$ and, therefore, $xay^{-1}=b^m$.
Thus, $b^kyay^{-1}=b^m$ which implies that $a$ and $b$ are weakly conjugate. This contradicts to the choice of $a$ and $b$.
\end{proof}

\begin{prop}
The complex $X_3$ satisfies all the conditions of $E_{\mathcal D}G$.
\end{prop}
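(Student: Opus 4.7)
The plan is to apply the characterization of $E_{\mathcal D}G$ quoted from~\cite{Lu} in Section~2: it will suffice to show that $X_3$ is a $G$-CW-complex with all isotropy in $\mathcal D$ and that $X_3^H$ is contractible for every $H\in\mathcal D$. The isotropy count is a cell-by-cell tally. Vertices $v_\gamma\in X$ have stabilizer $(\gamma,e)\Delta(\pi)(\gamma,e)^{-1}$; the remaining cells of $X$ inherit the free $G$-action on $E\pi\times E\pi$ off the translates of $\Delta$ and so carry trivial stabilizer. The 1-cells $g\sigma$ added in each $X_a^1$ have stabilizer $g\Delta(Z(a))g^{-1}$ by the corollary above, while the 2- and 3-cells attached by $G$-translation carry trivial stabilizer. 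Proposition~\ref{D} then places all these subgroups in $\mathcal D$.

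Next I would verify the contractibility of $X_3^H$ in three cases dictated by Proposition~\ref{D}. For $H=\{e\}$, $X_3^H$ is all of $X_3$; here I would use that each $X_a$ is contractible by the preceding proposition and that $X\hookrightarrow X_a$ is a CW cofibration between contractibles, so $X_3/X$ is homotopy equivalent to the wedge $\bigvee_{a\in A}(X_a/X)$ of contractibles, hence contractible, and therefore $X_3\simeq X_3/X$ is contractible. For $H=g\Delta(\pi)g^{-1}$, the cyclic-centralizer hypothesis forces the center of any non-abelian $\pi$ to be trivial (the abelian case is routine), so the only cell whose stabilizer contains $H$ is the vertex $gv$, and $X_3^H=\{gv\}$. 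For $H=g\Delta(Z(a))g^{-1}$ with $a\in A$, the fixed set inside $X_a$ is $gR_a\cong\R$, built into the construction of $X_a$.

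The main obstacle is this last case, where I must exclude contributions to $X_3^H$ from the other subcomplexes $X_b$ with $b\in A\setminus\{a\}$. Since the added 2- and 3-cells carry trivial stabilizer, only 1-cells matter, and their stabilizers are conjugates of $\Delta(Z(b))$. If $H$ were contained in such a conjugate $g\Delta(Z(b))g^{-1}$, the intersection would equal $H\ne\{(e,e)\}$, so Proposition~\ref{equal} would force $H=g\Delta(Z(b))g^{-1}$; the disjoint-union decomposition of $\mathcal D'$ proved two propositions above then forces $a$ and $b$ to be weakly conjugate, contradicting $b\in A\setminus\{a\}$. Hence $X_3^H=gR_a$ is contractible, completing the verification of all three cases.
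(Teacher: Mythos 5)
Your proof is correct, and it follows the same overall skeleton as the paper: verify L\"uck's criterion by checking that all isotropy groups lie in $\mathcal D$ and that the fixed set of each $H\in\mathcal D$ is contractible, with contractibility of $X_3$ itself obtained from contractibility of the pieces $X_a$ glued along the contractible $X$. The genuine difference is in the key step of ruling out contributions to $X_3^{\Delta(Z(a))}$ from the subcomplexes $X_b$, $b\ne a$: the paper does this geometrically via Proposition~\ref{no-intersection} (no $G$-translate of the pair $\{v,v_a\}$ lands among the vertices of $R_b$, so the fixed subcomplex of $\Delta(Z(b))$ acquires no new edges), whereas you never use that proposition and instead argue algebraically through cell stabilizers: a fixed $1$-cell of $X_b$ would have stabilizer a conjugate of $\Delta(Z(b))$ containing $H$, which by Proposition~\ref{equal} forces equality and by the disjoint-union decomposition of $\mathcal D'$ forces $a$ and $b$ to be weakly conjugate, contradicting the choice of $A$. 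Your route buys more explicitness than the paper's terse proof: you record the cell-by-cell isotropy tally (needed anyway for the isotropy condition, which the paper compresses into the remark that $G$ acts freely off $\bigcup_a GR_a$), and you treat the conjugates of the full diagonal $\Delta(\pi)$ as a separate case (fixed set a single vertex, using triviality of the center), a case the paper leaves implicit. The paper's argument is shorter and more geometric, but both ultimately rest on the same disjointness/trivial-intersection structure of $\mathcal D'$; the only places where you defer rather than argue are the same ones the paper defers on (e.g.\ that the fixed set of $g\Delta(Z(a))g^{-1}$ inside $X_a$ itself is exactly $gR_a$), so nothing essential is missing.
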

\begin{proof}
The CW complex $X_3$ is a $G$-complex as the union of $G$-complexes with a $G$-invariant intersection.
Note that $G$ acts freely on $$X_3\setminus(\bigcup_{a\in A}GR_a).$$  Already we know that the vertices of the fixed point complex of $\Delta(Z(b))$ are those of $R_b$.
In view of Proposition~\ref{no-intersection}, no new edges are spanned by vertices in $R_b$ and hence the fixed point set of $\Delta(Z(b))$ is just $R_b$ which is contractible.
Therefore, the fixed point sets are contractible for all groups $H\in\mathcal D$.

The complex $X_3$ is contractible as a union of contractible complexes $X_a$ with a contractible intersection.
\end{proof}

\subsection{CW structure of $B_{\mathcal D}G$}
Note that $B_{\mathcal D}G=E_{\mathcal D}/G$ contains $(B\pi\times B\pi)/\Delta(B\pi)$ as a subcomplex with a 3-dimensional complement.
Moreover, as a CW complex $$B_{\mathcal D}G=\bigcup_{a\in A}X_a/G=(B\pi\times B\pi)/\Delta(B\pi)\cup \bigcup_{a\in A}(e^1\cup e^2\cup e^3)$$ 
where $e^1$ is the image of $Int(\sigma)$, $e^2$ is the image of $Int(D)$, and
$ e^3$ is the image of the interior of the 3-ball $B$ from $X_a$. Since $D$ can be deformed to $I$, the complex $B_{\mathcal D}G$ is homotopy equivalent to
$$
(B\pi\times B\pi)/\Delta(B\pi)\cup \bigcup_{a\in A} e^3$$ 
where 3-cells are attached to the image of $I$. Hence the attaching maps are null-homotopic. Thus, the space $B_{\mathcal D}G$ is homotopy equivalent to the bouquet of $
(B\pi\times B\pi)/\Delta(B\pi)$ with
a wedge of 3-spheres,
$$
B_{\mathcal D}G\sim (B\pi\times B\pi)/\Delta(B\pi)\vee \bigvee_{a\in A}S^3.$$

\section{Conditions when $\TC(\pi)$ is maximal}

We use the following 
\begin{thm}[\cite{Dr1}]\label{square}
For a geometrically finite group $\pi$ with $\cd(\pi)=n$,
$$
H^{2n}(\pi\times\pi,\Z(\pi\times\pi))\ne 0.
$$
\end{thm}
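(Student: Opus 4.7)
The plan is to reduce the statement to a Künneth calculation in compactly supported cohomology and then deduce nonvanishing from Bieri's characterization of cohomological dimension. Since $\pi$ is geometrically finite, $B\pi$ can be chosen as a finite CW-complex and $E\pi$ is a locally compact, contractible $\pi$-CW-complex. The standard identification $H^*(G, \Z G) \cong H^*_c(EG;\Z)$, valid whenever $G$ has a finite $BG$ (and coming from the isomorphism between $\text{Hom}_{\Z G}(C_*(EG), \Z G)$ and the compactly supported cellular cochain complex $C_c^*(EG)$), together with $E(\pi\times\pi) = E\pi \times E\pi$ and $\Z(\pi\times\pi) \cong \Z\pi \otimes_\Z \Z\pi$ as $(\pi\times\pi)$-modules, transforms the desired statement into
\[
H^{2n}_c(E\pi \times E\pi;\Z) \;\neq\; 0.
\]

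Next I would apply the Künneth formula for compactly supported cohomology of locally compact CW-complexes. The assumption $\cd(\pi) = n$ forces $H^k_c(E\pi) = H^k(\pi, \Z\pi) = 0$ for every $k > n$, so in total degree $2n$ only the term $H^n_c(E\pi) \otimes_\Z H^n_c(E\pi)$ can contribute, and every Tor summand also vanishes because at least one of its arguments would sit above the vanishing threshold. This produces the identification
\[
H^{2n}(\pi\times\pi, \Z(\pi\times\pi)) \;\cong\; H^n(\pi, \Z\pi) \otimes_\Z H^n(\pi, \Z\pi).
\]

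It remains to show the right-hand side is nonzero. Bieri's theorem for type-$FP$ groups characterises $\cd(\pi)$ as the largest $k$ with $H^k(\pi, \Z\pi) \neq 0$, so each factor is nontrivial. The main obstacle is that a tensor product of two nonzero abelian groups over $\Z$ can still be zero (as with $\Z/p \otimes \Z/q$ for coprime $p,q$), so Bieri alone is not enough. I would rule this pathology out by showing that for a geometrically finite $\pi$ with $\cd(\pi) = n$ the top group-ring cohomology $H^n(\pi, \Z\pi)$ is $\Z$-torsion free, extracted by working directly with a finite free $\Z\pi$-resolution of $\Z$ arising from the cellular chain complex of $E\pi$ and analysing the cokernel at the top of the dualised complex. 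An alternative route that bypasses tensor-product issues entirely is to argue at the cochain level: the external square of a top cocycle representing a nonzero class in $H^n(\pi, \Z\pi)$ is itself a cocycle of degree $2n$ in $C^{2n}(\pi\times\pi, \Z(\pi\times\pi))$, and showing it is not a coboundary reduces to a tractable diagram-chase at the top of the tensor-product cochain complex, where there are no higher cochains left to bound from. This last step, in either version, is where the bulk of the work concentrates.
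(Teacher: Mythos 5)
First, a structural remark: the paper does not prove Theorem~\ref{square} at all --- it is imported from \cite{Dr1}, where it is the main result --- so there is no internal proof to compare your argument against, and any proposal here is really being measured against that reference. Your reduction is the standard first move and is carried out correctly: the identification $H^*(G,\Z G)\cong H^*_c(EG;\Z)$ for groups with finite $BG$, the K\"unneth decomposition coming from tensoring two finite free $\Z\pi$-resolutions, the observation that in total degree $2n$ only $H^n(\pi,\Z\pi)\otimes_\Z H^n(\pi,\Z\pi)$ survives (the $\operatorname{Tor}$ terms live in total degree $2n+1$ and so involve a vanishing factor), and Bieri's theorem that $H^n(\pi,\Z\pi)\neq 0$ for a type-$FP$ group with $\cd\pi=n$ are all fine. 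You have also correctly located the crux: $A\otimes_\Z A$ can vanish for nonzero $A$, and in fact $A\otimes_\Z A=0$ exactly when $A$ is a divisible torsion group (a direct sum of Pr\"ufer groups), so the theorem is equivalent to showing that $H^n(\pi,\Z\pi)$ is not of that form.

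The gap is that your treatment of this crux is not a proof but a deferral. Torsion-freeness of $H^n(\pi,\Z\pi)$ would indeed suffice (it forces $A\otimes\Q\neq 0$ and hence $A\otimes A\neq 0$), but ``analysing the cokernel at the top of the dualised complex'' cannot deliver it as stated: the cokernel of a homomorphism between free abelian groups can be an arbitrary countable abelian group, so any argument must use the $\Z\pi$-module structure and the acyclicity of the dualised complex in lower degrees in an essential way, and you do not indicate how. Torsion-freeness of $H^k(G,\Z G)$ is a theorem in the \emph{first} nonvanishing degree (ends, Farrell's theorem on $H^2$, and its generalisation by Geoghegan--Mihalik) and for duality groups, but for the \emph{top} degree of a general geometrically finite group it is not an available black box; ruling out the divisible-torsion possibility is precisely the content of \cite{Dr1}, not a routine verification. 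Your alternative route is circular for the same reason: once the K\"unneth isomorphism is in place, asking whether the external square of a top cocycle bounds is literally asking whether $[u]\otimes[u]$ is zero in $A\otimes_\Z A$, so the proposed ``diagram chase at the top'' has nothing new to chase. To close the argument you would need either to reproduce the (nontrivial) argument of \cite{Dr1} or to supply an independent proof that $H^n(\pi,\Z\pi)$ is not a divisible torsion group.
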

We note that for a group $G$ with finite $BG$ there is the equality~\cite{Bro} $$H^k(G,\Z G)=H_c^k(EG;\Z)$$ where $EG$ is the universal cover of $BG$
and $H_c^*$ stands for cohomology with compact supports.

\

Here is our main result.
\begin{thm}
Suppose that a geometrically finite group $\pi$ has cyclic centralizers and  $\cd\pi=n\ge 2$. Then $\TC(\pi)=2n$.
\end{thm}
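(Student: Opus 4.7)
The plan is to apply Theorem~\ref{FGLO}: since the general upper bound $\TC(\pi)\le 2\cd(\pi)=2n$ is already in the literature, it suffices to show that the canonical map $f:E(\pi\times\pi)\to E_{\mathcal D}(\pi\times\pi)=X_3$ admits no $(\pi\times\pi)$-equivariant deformation into the $(2n-1)$-skeleton. The source of the top-dimensional obstruction will be the nonvanishing $H^{2n}(\pi\times\pi,\Z(\pi\times\pi))\cong H^{2n}_c(E\pi\times E\pi;\Z)\ne 0$ supplied by Theorem~\ref{square} together with Brown's formula quoted just after it.

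The homotopy decomposition $B_{\mathcal D}G\simeq(B\pi\times B\pi)/\Delta(B\pi)\vee\bigvee_{a\in A}S^3$ established in the previous subsection is crucial here: because the hypothesis $n\ge 2$ gives $2n\ge 4>3$, the wedged $3$-spheres contribute nothing in degree $2n$, and the top-dimensional topology of $B_{\mathcal D}G$ is controlled entirely by the factor $(B\pi\times B\pi)/\Delta(B\pi)$. This reduces the problem to detecting essentiality in degree $2n$ of the quotient map $q:B\pi\times B\pi\to(B\pi\times B\pi)/\Delta(B\pi)$, or equivalently of its equivariant lift $f$.

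For this detection I would pass to equivariant compactifications of $E\pi\times E\pi$ and of $X_3$ compatible with the canonical map, so that cohomology with compact supports becomes available on both sides. Since $\cd\Delta(\pi)=n<2n$, the $G$-invariant subset $G\cdot\Delta(E\pi)\subset E\pi\times E\pi$ that is collapsed in forming $X$ has cohomological dimension at most $n$. A long exact sequence of a pair (or a Mayer--Vietoris argument) in compactly supported cohomology should then show that the nontrivial class from Theorem~\ref{square} is destroyed neither by the collapse $E\pi\times E\pi\to X$ nor by the subsequent attachment of the $1$-, $2$-, and $3$-dimensional cells that build $X_3$ out of $X$, again using $2n\ge 4>3$. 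Pulled back along $f$, this class yields the required nontrivial top-dimensional obstruction, ruling out any equivariant deformation into $X_3^{(2n-1)}$.

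The main obstacle will be carrying out the third step rigorously: constructing equivariant compactifications of $E\pi\times E\pi$ and $X_3$ that are compatible with both the collapsing map onto $X$ and the attaching maps of the construction of $X_a$, and then carefully tracking the top-dimensional class through the resulting long exact sequences of compactly supported cohomology. This is precisely where the compactification techniques and the main result of~\cite{Dr1} alluded to in the introduction come into play, and it is also the step where the hypothesis $n\ge 2$ is genuinely used, so that the $3$-dimensional complement $X_3\setminus X$ cannot interfere with the class in degree $2n$.
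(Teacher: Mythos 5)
Your overall strategy coincides with the paper's: combine Theorem~\ref{FGLO} with the nonvanishing of $H^{2n}(\pi\times\pi,\Z(\pi\times\pi))=H^{2n}_c(E\pi\times E\pi;\Z)$ from Theorem~\ref{square}, and argue that collapsing the translates of the diagonal (which have dimension $n$) and attaching the cells of dimension at most $3$ cannot destroy a class in degree $2n\ge 4$. However, there is a genuine gap at the step you defer to as ``the main obstacle,'' and you have mislocated where the difficulty actually lies. Cohomology with compact supports is functorial only for proper maps and invariant only under proper homotopies; the canonical map $f:E\to X_3$ and, more importantly, a hypothetical $G$-equivariant deformation $\Phi:E\times I\to X_3$ of $f$ into $X_3^{(2n-1)}$ are not proper. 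So the phrase ``pulled back along $f$, this class yields the required obstruction'' does not yet mean anything: you cannot pull back a compactly supported class along $f$, nor conclude that a non-proper homotopy preserves it. The issue is not really the existence of compatible compactifications but whether the deformation $\Phi$ induces anything at all on them.

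The paper's proof spends essentially all of its effort closing exactly this gap. It does not compactify $X_3$; instead it maps $X_3$ onward to $E/Z$ (collapsing everything outside $X$, where $Z=\bigcup_{g\in G} g\Delta(E\pi)\cup E^{(2)}$) and then, via a continuous bijection $\xi$, to $\alpha E/\alpha Z$, where $\alpha$ denotes one-point compactification. The key claim is that the composed homotopy $\xi\psi\Phi:E\times I\to\alpha E/\alpha Z$ extends continuously over the point at infinity when that point is sent to the singular point $z_0$. This is proved by a metric argument: lifting a metric from the compact quotient $B$, using that $\psi\Phi$ is Lipschitz away from the singular set and that $G$ acts properly discontinuously by isometries there, one shows that the entire homotopy tracks of points escaping to infinity in $E$ must converge to $z_0$. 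Only after this extension does one obtain an honest homotopy of maps of compact spaces, so that $\bar q^*$ on $H^{2n}$ factors through the $(2n-1)$-dimensional space $\alpha E^{(2n-1)}/\alpha Z$ and vanishes, contradicting Theorem~\ref{square}. Your proposal would need to supply this properness-at-infinity argument or an equivalent. Note also that your intermediate reduction via the wedge decomposition of $B_{\mathcal D}G$ is a statement about orbit spaces and yields the theorem directly only when a suitable local system with nonvanishing $H^{2n}$ exists on $(B\pi\times B\pi)/\Delta(B\pi)$, which the paper verifies only in the Poincar\'e duality case treated in Section~4.
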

\begin{proof}
Let $G=\pi\times\pi$ and let $E=E\pi\times E\pi$ where  $E\pi$ is the universal cover of a finite simplicial complex $B\pi=E\pi/\pi$.
Let $B=BG=E/G$.
Suppose that the canonical map $f:E\to X_3$ can be deformed by a $G$-homotopy $\Phi:E\times I\to X_3$ to a map with the image in $X_3^{(2n-1)}$.
 Let $$Z=\bigcup_{g\in G}g\Delta(E\pi)\cup E^{(2)}.$$
We use the notation $\alpha Y$ for the one-point compactification of a locally compact space $Y$.
Since $\dim\alpha Z=\max\{n,2\}< 2n-1$, the exact sequence of pair implies that $H^{2n}(\alpha E)=H^{2n}(\alpha E,\alpha Z)$. Therefore, the collapsing map
$\bar q:\alpha E\to\alpha E/\alpha Z$ induces an isomorphism of cohomology groups
$$\bar q^*:H^{2n}(\alpha E/\alpha Z;\Z)\to H^{2n}(\alpha E;\Z).$$

Consider the commutative diagram
\[
\xymatrix{& X_3\ar[d]^{\psi}\\
 E\ar[ur]^f\ar[r]^q \ar[d]^{\subset}
& E/Z \ar[d]^{\xi}\\
 \alpha E \ar[r]^{\bar q}
& \alpha E/\alpha Z}
\]
where $\psi$ is defined by sending $X_3\setminus X$ to the point in $E/Z$ generated by $Z$. The map $\xi$ is a continuous bijection which is a homeomorphism in the complement of the singular one-point sets defined by $Z$ and $\alpha Z$.

Let $\alpha=\alpha E\setminus E$ denote the point at infinity.
We claim that sending 
$\alpha\times I$ to the singular point $z_0\in\alpha E/\alpha Z$ continuously extends
the homotopy $$\xi\circ\psi\circ\Phi:E\to\alpha E/\alpha Z$$ to a continuous homotopy $\hat\Phi:\alpha E\times I\to \alpha E/\alpha Z$.

Assume the contrary: There is an open neighborhood $U$ of $z_0$ and a sequence $x_n\in E$ converging to $\alpha$ such that
$$\xi\psi\Phi(x_n\times I)\cap(\alpha E/\alpha Z)\setminus U\ne\emptyset$$
for all $n$. Then $\psi\Phi(x_n\times I)\cap C\ne\emptyset$ for all $n$ where $$C=\xi^{-1}((\alpha E/\alpha Z)\setminus U)$$ is a compact subset of $E\setminus Z\subset E/Z$.
Fix $t_n\in I$ such that $\psi\Phi(x_n,t_n)\in C$.
 Let $p:E\to B$ be the universal covering map and let  $$\bar p:E/Z\to B/p(Z)$$ be the induced map. We consider a metric $d_E$ on $E$ lifted from a metric on $B$.
Let $V$ be an open neighborhood of the singular point $\bar b_0\in B/p(Z)$
such that $$C\cap \bar p^{-1}(\bar V)=\emptyset$$ where $\bar V$ is the closure of $V$. 
We may assume that the quotient map $\nu: B\to B/p(Z)$ is 1-Lipschitz. Consider the metric $d'$ on $W=E/Z\setminus\bar p^{-1}(V)$  lifted with respect to $\bar p$
from the matric on $B/p(Z)\setminus V$. 
Note that the action of $G$ on $W$ is proper discontinuous by isometries.
Let $\epsilon=dist_{d'}(C,p^{-1}\partial V)>0$.
The homotopy $\psi\Phi$ defines a homotopy $\hat\Phi:B\times I\to B/p(Z)$ such that the diagram
$$
\begin{CD}
E\times I @>\phi\Phi>> E/Z\\
@Vp\times 1VV @VV\bar pV\\
B\times I @>\hat\Phi>> B/p(Z)\\
\end{CD}
$$
commutes.
Then the restriction $$\psi\Phi|_{W'}:W'=(E\times I)\setminus p^{-1}\nu^{-1}(V)\to W$$ is  Lipschitz.

Compactness of $B$ and $I$ implies that there is  a subsequence $(x_{n_k},t_{n_k})$ such that $\{p(x_{n_k})\}$ converges to some $\bar x\in B$ and $\{t_{n_k}\}$
converges to some $t^*\in I$. Note that $\bar\Phi(\bar x,t^*)\in\bar p(C)$.
 Let $p(x)=\bar x$. Then $G(x)\times t^*\subset W'$.
By the choice of the metric on $E$ there is  a sequence of elements $g_k\in G$, $\|g_k\|\to\infty$,
such that $$d_E(x_{n_k},g_kx)\to 0.$$

Since $\bar p(C)\cap \bar V=\emptyset$, for sufficiently large $k$ we have $Gx\times t_{n_k}\subset W'$.
Then the Lipschitz condition of the restriction of $\psi\Phi$ to $W'$ implies 
$$d'(\psi\Phi(x_{n_k},t_{n_k}),\psi\Phi(g_k(x),t_{n_k}))<\epsilon/2$$  for sufficiently large $k$ .

Then $$d'(\psi\Phi(g_kx,t^*),C)\le d'(\psi\Phi(g_kx,t^*),\psi\Phi(g_kx,t_k)) + d'(\psi\Phi(g_kx,t_k),\psi\Phi(x_{n_k},t_k)).$$
For sufficiently large $k$ each of the above summands does not exceed $\epsilon/2$.
Thus, the sequence $$g_k(\psi\Phi(x,t^*))=\psi\Phi(g_kx,t^*)$$ stays in a bounded  distance from a compact set.
This contradicts with the properness of the action of $G$ on $W$.

Thus, there is a homotopy $$\hat\Phi:\alpha E\times I\to \alpha E/\alpha Z$$ of $\bar q$ to a map with the image in the $(2n-1)$-dimensional compact space $\alpha E^{(2n-1)}/\alpha Z$.
Therefore, $\bar q$ induces zero homomorphism of $2n$-cohomology. Therefore, $$H^{2n}(G,\Z G)=H^{2n}_c(E;\Z)=H^{2n}(\alpha E;\Z)=0.$$
This contradicts to Theorem~\ref{square}.
\end{proof}

\section{Maximal value of TC via cohomology of the orbit spaces}

In this section we present a shorter proof of the main theorem for Poincare Duality groups $\pi$.
\begin{prop}\label{diag}
Suppose that for a geometrically finite group $\pi$ with $\cd\pi=n\ge 2$ and cyclic centralizers, $H^{2n}(B\pi\times B\pi;q^*\mathcal A)\ne 0$ for some local system of coefficients $\mathcal A$ on the quotient space
$(B\pi\times B\pi)/\Delta(B\pi)$ where $q:B\pi\times B\pi\to (B\pi\times B\pi)/\Delta(B\pi)$ is the quotient map. Then $\TC(\pi)=2n$.
\end{prop}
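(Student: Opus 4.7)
The plan is to argue by contradiction, assuming $\TC(\pi)\le 2n-1$, and to exploit the homotopy equivalence
$$ B_{\mathcal D}G \simeq (B\pi\times B\pi)/\Delta(B\pi) \vee \bigvee_{a\in A}S^3 $$
established at the end of Section~2. By Theorem~\ref{FGLO} the assumption yields a $G$-homotopy of the canonical map $f:E\to X_3$ into $X_3^{(2n-1)}$, and passing to orbit spaces gives a homotopy of the induced map $\bar f:B\pi\times B\pi\to B_{\mathcal D}G$ into $(B_{\mathcal D}G)^{(2n-1)}$. By construction $\bar f=\iota\circ q$, where $q$ is the quotient map of the proposition and $\iota$ denotes the inclusion $(B\pi\times B\pi)/\Delta(B\pi)\hookrightarrow B_{\mathcal D}G$ of subcomplexes. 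The homotopy equivalence above furnishes a map $r:B_{\mathcal D}G\to (B\pi\times B\pi)/\Delta(B\pi)$ with $r\circ\iota\simeq\id$, so that $q\simeq r\circ\bar f$. In cohomology this gives $q^*=\bar f^*\circ r^*$, and since $\bar f$ is homotopic to a map into the $(2n-1)$-skeleton, $\bar f^*$ in degree $2n$ factors through $H^{2n}$ of a $(2n-1)$-dimensional complex and so vanishes. Therefore
$$ q^*:H^{2n}\bigl((B\pi\times B\pi)/\Delta(B\pi);\mathcal A\bigr)\longrightarrow H^{2n}\bigl(B\pi\times B\pi;q^*\mathcal A\bigr) $$
is the zero homomorphism.

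The contradiction will come from showing that $q^*$ is in fact an isomorphism. Since $q$ crushes $\Delta(B\pi)$ to a point, the pullback $q^*\mathcal A$ restricts to a constant system $V$ on $\Delta(B\pi)\cong B\pi$, so $H^k(\Delta(B\pi);q^*\mathcal A|_\Delta)\cong H^k(\pi;V)$ vanishes for $k>n$. For $n\ge 2$ both ends of the segment
$$ H^{2n-1}(\pi;V)\to H^{2n}\bigl(B\pi\times B\pi,\Delta(B\pi);q^*\mathcal A\bigr)\to H^{2n}\bigl(B\pi\times B\pi;q^*\mathcal A\bigr)\to H^{2n}(\pi;V) $$
from the long exact sequence of the pair vanish, and combining with the natural CW-pair identification
$$ H^{2n}\bigl(B\pi\times B\pi,\Delta(B\pi);q^*\mathcal A\bigr)\cong H^{2n}\bigl((B\pi\times B\pi)/\Delta(B\pi);\mathcal A\bigr) $$
shows that $q^*$ is an isomorphism.

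With $q^*$ forced to be both zero and an isomorphism, the hypothesis $H^{2n}(B\pi\times B\pi;q^*\mathcal A)\ne 0$ rules out $\TC(\pi)\le 2n-1$; together with the general upper bound $\TC(\pi)\le 2\cd\pi=2n$ this forces $\TC(\pi)=2n$. The most delicate technical point to verify is the behavior of local coefficients through the collapsing map: concretely, that $q^*\mathcal A$ restricts to a genuinely constant system on $\Delta(B\pi)$ and that the CW-pair identification of twisted pair cohomology with twisted quotient cohomology remains valid. Everything else is a direct consequence of the CW structure of $B_{\mathcal D}G$ developed in Section~2.
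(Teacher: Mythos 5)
Your argument is correct and follows essentially the same route as the paper: both show that $q^*$ is an isomorphism in degree $2n$ via the exact sequence of the pair $(B\pi\times B\pi,\Delta(B\pi))$ (using $\cd\pi=n\le 2n-2$) together with the excision identification of relative and quotient cohomology, and then conclude via Theorem~\ref{FGLO}. Your explicit use of the retraction $r$ furnished by the wedge decomposition of $B_{\mathcal D}G$ carefully justifies the passage from non-deformability into the skeleton of $(B\pi\times B\pi)/\Delta(B\pi)$ to non-deformability into the skeleton of $B_{\mathcal D}G$, a step the paper asserts without elaboration.
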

\begin{proof}
The exact sequence of pairs form a commutative diagram
$$
\begin{CD}
 H^{2n}(B\pi\times B\pi;q^*\mathcal A) @<\cong<< H^{2n}(B\pi\times B\pi,\Delta(B\pi);q^*\mathcal A) @<<< 0\\
@ Aq^*AA  @A\cong AA @.\\
 H^{2n}(B\pi\times B\pi/\Delta(\pi);\mathcal A) @<\cong<< H^{2n}(B\pi\times B\pi/\Delta(B\pi),pt;\mathcal A) @<<< 0\\
\end{CD}
$$
where the right vertical homomorphism is an isomorphism in view of excision~\cite{Bre}. Thus, $q^*$ is an isomorphism.
Therefore $q$ cannot be deformed to the $2n-1$-skeleton in $(B\pi\times B\pi)/\Delta(B\pi)$ as well as in $B_{\mathcal D}G$. Hence $\tilde q:E\pi\times E\pi\to E_{\mathcal D}G$ cannot be $(\pi\times\pi)$-equivariantly deformed to the $(2n-1)$-skeleton.
By Theorem~\ref{FGLO}, $\TC(\pi)\ge 2n$.
\end{proof}
\begin{cor}\label{new for manifolds}
Let a group $\pi$ be with cyclic centralizers and with the classifying space $B\pi$ be a manifold. Then $\TC(\pi)=2\cd(\pi)$.
\end{cor}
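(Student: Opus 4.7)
The plan is to verify the hypothesis of Proposition~\ref{diag} by producing an explicit local coefficient system $\mathcal A$ on $(B\pi\times B\pi)/\Delta(B\pi)$, built from the orientation character of the closed $2n$-manifold $B\pi\times B\pi$. Once such an $\mathcal A$ is exhibited, Proposition~\ref{diag} yields $\TC(\pi)\ge 2n$, and the general upper bound $\TC(\pi)\le 2\cd(\pi)=2n$ recorded in the introduction forces equality.

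Set $n=\cd\pi$ and $M=B\pi$, which by hypothesis is a closed aspherical $n$-manifold; in particular $\pi$ is geometrically finite. Let $w_M:\pi\to\Z/2$ denote the orientation character of $M$. Then the product $M\times M$ is a closed $2n$-manifold whose orientation local system $\mathcal L$ is classified by $w:=w_M\oplus w_M :\pi\times\pi\to\Z/2$. The decisive observation is that $w$ vanishes on $\Delta(\pi)$: for every $\gamma\in\pi$,
\[
w(\gamma,\gamma)=w_M(\gamma)+w_M(\gamma)\equiv 0\pmod 2.
\]
Since the quotient map $q:M\times M\to Y:=(M\times M)/\Delta(M)$ induces a surjection $\pi_1(M\times M)=\pi\times\pi\twoheadrightarrow \pi_1(Y)$ with kernel the normal closure of $\Delta(\pi)$, the character $w$ descends to a homomorphism $\bar w:\pi_1(Y)\to\Z/2$, and hence defines a rank-one $\Z$-local system $\mathcal A$ on $Y$ with $q^*\mathcal A=\mathcal L$. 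Poincaré duality for the closed $2n$-manifold $M\times M$ with coefficients in its own orientation system gives
\[
H^{2n}(M\times M;\,q^*\mathcal A)\;=\;H^{2n}(M\times M;\mathcal L)\;\cong\; H_0(M\times M;\Z)\;=\;\Z\;\neq\;0,
\]
so Proposition~\ref{diag} applies.

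The sole point demanding care is the identification $q^*\mathcal A=\mathcal L$: one must check not merely that the two characters of $\pi_1(M\times M)$ agree but that the associated rank-one local systems on $M\times M$ coincide. This is automatic because a rank-one $\Z$-local system on a path-connected space is determined up to isomorphism by its monodromy character, and the two characters agree tautologically by the construction of $\bar w$. Beyond this observation, the argument is a direct deduction from Proposition~\ref{diag} and classical Poincaré duality with local coefficients; no further input from the hyperbolic or centralizer structure of $\pi$ is needed, since cyclic centralizers are already absorbed into the hypotheses of Proposition~\ref{diag}.
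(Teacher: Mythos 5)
Your proof is correct and is essentially the paper's own argument: the paper likewise applies Proposition~\ref{diag} with the coefficient system $\mathcal O\hat\otimes\mathcal O$ (the orientation sheaf of $B\pi\times B\pi$, i.e.\ the rank-one system classified by $w_M\oplus w_M$), observing that the diagonal acts trivially on the stalk so that it descends through $q$. Your version merely unifies the orientable and non-orientable cases and spells out the descent via the normal closure of $\Delta(\pi)$, which the paper delegates to a citation.
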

\begin{proof}
In the case of orientable manifold $B\pi$ we apply Proposition~\ref{diag} by taking the  integers as coefficients.

If $B\pi$ is non-orientable, we consider the tensor product  $\mathcal O\hat\otimes\mathcal O$ of the orientation sheaves coming from different factors.
Since the action of the diagonal subgroup is trivial on the stalk $\Z\otimes\Z$, it follows that $\mathcal O\hat\otimes\mathcal O=q^*\mathcal A$
for some local system $\mathcal a$ on $(B\pi\times B\pi)/\Delta(B\pi)$ (see~\cite{Dr2}).
\end{proof}

\end{document}